\newtheorem{proposition}{Proposition}
\newenvironment{proof}[1][Proof]{\textbf{#1.} }{\ \rule{0.5em}{0.5em}}
\author{Yves Le Jan}
\title{Homology of Brownian loops } 
\begin{document}
\maketitle

\footnotetext{ Key words and phrases: Markov loops, homology, holonomy, zeta regularization}
\footnotetext{  AMS 2000 subject classification:  60K99, 60J55, 60G60.}
The purpose of this note is to extend to Brownian loops some homology and holonomy results obtained in the case of discrete loops on a graph (see \cite{lejanito},
\cite{lejanbakry}, and \cite{stfl}).

\bigskip

To understand our construction, note first the following: In the finite graph case  with exponential holding times considered in \cite{stfl}, the infinitesimal generator of the semigroup $P_t$ is $I-P$, , with $P$ a submarkovian matrix. If $-\rho_i$ denote the eigenvalues of the infinitesimal generator $P-I$, $$- \ln(\det (I-P)=\sum- \ln(\rho_i)=\zeta'(0)$$
where $\zeta(s)=\sum \rho_i^{-s}=Tr((I-P)^{-s}=\frac{1}{\Gamma(s)}\mu(T^{s})$
is the Mellin transform of $Tr(P_t)$:
$$\zeta(s)=\frac{1}{\Gamma(s)}\int_0^{\infty}t^{s-1}Tr(P_t)dt.$$\\
With the notations of \cite{stfl}, \cite{lejanito}, and 
\cite{lejanbakry}, we have, for any positive $s$, the following results which yield the distribution of the vertex and edge occupation fields under the regularized loop measure $T^s (l)\mu(dl)$:
 $$\mu(T^{s}(e^{-\left\langle \widehat{l},\chi\right\rangle }
-1))=\Gamma(s)[Tr((I-\frac{\lambda}{\lambda+\chi}P)^{-s})-Tr((I-P)^{-s})]$$
and 
$$\mu(T^{s}(e^{-\left\langle N(l),Z\right\rangle }
-1))=\Gamma(s)[Tr((I-P\ast {Z})^{-s})-Tr((I-P)^{-s})].$$
 These expressions converge respectively, as $s$ decreases to zero, towards $\ln(\frac{\det(I-\frac{\lambda}{\lambda+\chi})}{\det(I-P)}) $ and $\ln(\frac{\det(I-P\ast {Z})^{-s})}{\det(I-P)})$\\
  which yield the distribution of the vertex and edge occupation fields under the loop measure $\mu(dl)$.

\bigskip
We consider in the following a compact Riemannian manifold $X$ with metric tensor $g_{i,j}$ and continuous positive killing rate $k$. Lebesgue measure is denoted by $dx$. The energy form is defined on smooth functions by:
\[
e(f,f)=\frac{1}{2}\int g^{i,j}(x)\frac{\partial f}{\partial x_{i}%
}\frac{\partial f}{\partial x_{j}}\det(g)^{-\frac{1}{2}}(x)dx+\int k(x)f(x)^{2}%
\det(g)^{-\frac{1}{2}}(x)dx.
\]
The associated heat semigroup is denoted  $P_t$. The corresponding infinitesimal generator is
\[
A=\frac{1}{2}\Delta_{x}-k(x).
\]

\medskip
We denote by $\mu$ the loop measure associated with the corresponding continuous $\det(g)^{-\frac{1}{2}}(x)dx$-symmetric Markov process, i.e. the killed Brownian motion on $X$. It can also be viewed as a shift invariant measure on based loops. We can refer to \cite{LW}, \cite{stfl} and \cite{rflj} for the general definition in terms of Markovian bridges,
The Poisson process of Brownian loops (Lawler and Werner's ''loop soup'') is
defined in the same way as in the graph case.\\

\medskip

For the Brownian motion on a $d$-dimensional compact Riemannian manifold killed at positive rate, the same integral $\frac{1}{\Gamma(s)}\int_0^{\infty}t^{s-1}Tr(P_t)dt$ defines 
$\zeta(s)$ for $s>\frac{d}{2}$. It can be extended into a meromorphic function on $\mathbb{C}$ with one pole at $\dfrac{d}{2}$.\\
This follows from M-P asymptotics of the heat kernel (see \cite{MP})  and from well known properties of the Mellin transform (see for example \cite{Zag}).
The expression $e^{-\zeta'(0)}$ defines the zeta-regularized determinant of $-A$, denoted $\det'(-A)$. Such determinants have already been used in the context of loop measures on Riemann surfaces (see \cite{Dub}). \\
In the case of the Brownian motion on a $d$-dimensional torus killed at constant rate $m^2$, $\zeta(s)=\sum_{k_i \in  \mathbb{N}^d} (m^2+\frac{1}{2} \sum k_i^2)^{-2s}$ for $s\in ]\frac{d}{2},\infty[$.

The Poisson process of loops of intensity $\alpha \mu$ associated with the Brownian motion on a $d$-dimensional compact Riemannian manifold killed at positive rate is  denoted $\mathcal{L}_{\alpha}$. It defines a random walk 
on the homology group  of $X$, denoted $H_1(\mathbb{Z})$ obtained by adding the contributions of each loop. We will determine its distribution via a Fourier integral on the Jacobian torus, $ Jac=H^1(\mathbb{R})/ H^1(\mathbb{Z})$. \\Here, we denote by $H^1(\mathbb{R})$ the space of harmonic one-forms and by $H^1(\mathbb{Z})$ the space of harmonic one-forms $\omega$ such that for all loops $\gamma$ the holonomy $\omega(\gamma)$ is an integer.\\
 We need to determine, for any harmonic one-form $\omega\in H^1(\mathbb{R})$, the integral $\int (e^{2\pi i\int_l\omega}-1)\mu(dl)$. Note that:\\
 
a) The definition of the integral of the closed $1$-form on a non-smooth loop causes no difficulties: $\int_l \omega=\int_{l'}\omega$ if $l'$ is a smooth loop  close enough to $l$ (with $T(l)=T(l')$). Indeed, a smooth loop which is not homotopic to zero has a minimum positive diameter and if the uniform distance between two smooth loops $l'$ and $l"$ is small enough, we can see by cutting them into path segments of small diameter and joining the extremities of these segments by geodesics that $\int_{l'}\omega=\int_{l"}\omega$.\\

b) The integral $\int (e^{2\pi i\int_l\omega}-1)\mu(dl)$ is finite, as short loops have zero homology.\\

c) $\int T(l)^s (e^{2\pi i\int_l\omega}-1)\mu(dl)$ is holomorphic in $s$. \\

d)  $\zeta_{\omega}(s)= \dfrac{1}{\Gamma(s)}\int_0^{\infty}t^{s-1}Tr(P_t^{\omega})dt.$ is well defined on $s>\frac{n}{2}$ with by definition $P_t^{\omega}(x,y)=\int e^{2\pi i\int_{l}\omega} \mathbb{P}_t^{x,y}(dl)$.\\

$\zeta_{\omega}$ is the zeta function associated with the generator $A+2\pi i\langle \omega,d \rangle)- 2\pi^2 \Vert \omega \Vert^2$.\\
From c), $\zeta_{\omega}(s)$ can be extended into a meromorphic function on $\mathbb{C}$ with one pole at $\frac{n}{2}$. Then we easily get the following:
 \begin{proposition} 
$E(e^{2\pi i \sum_{l\in \mathcal{L}_{\alpha} }\int_l \omega})=e^{\alpha [ \zeta_{\omega}'(0)-\zeta'(0)]}=\left[ \frac{\det'(-A)}{\det'(-A-i\langle \omega,d \rangle)+2\pi^2 \Vert \omega \Vert^2}\right ]^{\alpha }.$
 \end{proposition}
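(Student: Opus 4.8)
The plan is to reduce the left-hand side to a single loop-measure integral and then evaluate that integral by zeta regularization, exactly paralleling the discrete computation recalled at the start of the note. Since $\mathcal{L}_{\alpha}$ is a Poisson point process of intensity $\alpha\mu$, the exponential (Campbell) formula for Poisson measures gives
\[
E\!\left(e^{2\pi i\sum_{l\in\mathcal{L}_{\alpha}}\int_l\omega}\right)=\exp\!\left(\alpha\int\big(e^{2\pi i\int_l\omega}-1\big)\,\mu(dl)\right),
\]
the integral being absolutely convergent by b). Thus everything reduces to the single identity $\int(e^{2\pi i\int_l\omega}-1)\,\mu(dl)=\zeta_{\omega}'(0)-\zeta'(0)$, together with a final translation of $\zeta'(0)$ and $\zeta_{\omega}'(0)$ into regularized determinants.

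To evaluate that integral I would regularize by the factor $T(l)^{s}$ and pass to traces. Recalling the definition of the loop measure through the bridge measures, $\mu=\int_0^{\infty}\frac1t\int_X\mathbb{P}_t^{x,x}\,dx\,dt$, and using $P_t^{\omega}(x,y)=\int e^{2\pi i\int_l\omega}\,\mathbb{P}_t^{x,y}(dl)$, the insertion of $T(l)^{s}=t^{s}$ followed by integration over the diagonal turns the loop integral into a difference of heat traces:
\[
\mu\!\left(T^{s}\big(e^{2\pi i\int_l\omega}-1\big)\right)=\int_0^{\infty}t^{s-1}\big(Tr(P_t^{\omega})-Tr(P_t)\big)\,dt=\Gamma(s)\,[\zeta_{\omega}(s)-\zeta(s)].
\]
I would first justify this chain for $\Re s>\tfrac d2$, where the two Mellin integrals defining $\Gamma(s)\zeta_\omega(s)$ and $\Gamma(s)\zeta(s)$ converge separately, and then propagate the equality by the meromorphic continuation of $\zeta$ and $\zeta_{\omega}$.

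Next I would let $s\downarrow 0$. By c) the left-hand side is holomorphic at $s=0$, and its value there is precisely the integral we want. On the right-hand side $\Gamma$ has a simple pole at $0$ with residue $1$, so the holomorphy of the product forces the cancellation $\zeta_{\omega}(0)=\zeta(0)$; this is the analytic shadow of the geometric fact, underlying a)--b), that short loops are homologically trivial and hence carry no holonomy. Consequently $\zeta_{\omega}(s)-\zeta(s)=s\,[\zeta_{\omega}'(0)-\zeta'(0)]+O(s^{2})$, and multiplying by $\Gamma(s)=s^{-1}+O(1)$ yields $\int(e^{2\pi i\int_l\omega}-1)\,\mu(dl)=\zeta_{\omega}'(0)-\zeta'(0)$. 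Exponentiating via the Campbell formula gives the first claimed equality.

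The second equality is then pure bookkeeping: by definition $\det'(-A)=e^{-\zeta'(0)}$, and since $\zeta_{\omega}$ is the zeta function of the generator $A+2\pi i\langle\omega,d\rangle-2\pi^{2}\Vert\omega\Vert^{2}$, one has $e^{-\zeta_{\omega}'(0)}=\det'\big(-A-2\pi i\langle\omega,d\rangle+2\pi^{2}\Vert\omega\Vert^{2}\big)$; forming the ratio $e^{\alpha[\zeta_{\omega}'(0)-\zeta'(0)]}$ produces the displayed determinant quotient raised to the power $\alpha$. The step that requires genuine care, and which I expect to be the main obstacle, is the regularity at $s=0$: interpreting $\mu(T^{s}F)=\Gamma(s)[\zeta_{\omega}(s)-\zeta(s)]$ as an identity of meromorphic functions and verifying that the simple pole of $\Gamma$ is exactly killed by the vanishing $\zeta_{\omega}(0)-\zeta(0)=0$. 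This cancellation is what points a)--c) are designed to guarantee, so the analytic burden is essentially discharged in advance by those observations.
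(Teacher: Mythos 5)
Your proposal is correct and follows essentially the same route as the paper: regularize by $T^{s}$, identify $\mu\bigl(T^{s}(e^{2\pi i\int_l\omega}-1)\bigr)=\Gamma(s)[\zeta_{\omega}(s)-\zeta(s)]$ via the bridge representation of $\mu$, and extract $\zeta_{\omega}'(0)-\zeta'(0)$ at $s=0$ from the behavior of the reciprocal gamma function, which is exactly the paper's one-line argument. You merely make explicit what the paper leaves implicit (the Campbell formula for the Poisson loop ensemble, the trace identity, and the pole cancellation $\zeta_{\omega}(0)=\zeta(0)$ forced by holomorphy at $s=0$), all of which is consistent with points a)--d) and the discrete analogue recalled in the introduction.
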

 \begin{proof}  

$\int (e^{2\pi i\int_l\omega}-1)\mu(dl)=\dfrac{d}{ds}_{ | s=0}\frac{1}{\Gamma(s)}\int T^s( e^{2\pi i\int_l\omega}-1)\mu(dl)= \zeta_{\omega}'(0)-\zeta'(0)$, as the reciprocal gamma function vanishes and has unit derivative in zero.\\
\end{proof} 
 
 \medskip
 
 The distribution of the induced random homology $h^{(\alpha)}$ defined by the identity $\langle h^{(\alpha)} ,\omega, \rangle= \sum_{l\in \mathcal{L}_{\alpha} }\int_l \omega$ for any $\omega\in H^1(\mathbb{R}) $ can be computed as a Fourier integral on the Jacobian torus.  Let $d\omega$ be the Lebesgue measure on $ Jac$ and let $\vert Jac\vert$ denote its volume. Then:


 \begin{proposition} 
  For all $j \in H_1(\mathbb{Z})$, we have:
 
$$ P(h^{(\alpha)}=j)=\frac{1}{\vert Jac\vert}\int_{Jac} \left[ \frac{\det'(-A)}{\det'(-A-2\pi i\langle \omega,d \rangle)+ 2\pi^2 \Vert \omega \Vert^2}\right ]^{\alpha }e^{-2\pi i\langle  j,\omega \rangle} \:d\omega.$$
 \end{proposition}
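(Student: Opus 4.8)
The plan is to recognize that the random homology $h^{(\alpha)}$ takes values in the lattice $H_1(\mathbb{Z})$, and that its distribution is determined by its characteristic function on the dual torus $Jac = H^1(\mathbb{R})/H^1(\mathbb{Z})$. The pairing $\langle h^{(\alpha)}, \omega \rangle$ is precisely $\sum_{l \in \mathcal{L}_{\alpha}} \int_l \omega$, so the quantity computed in Proposition 1, namely $E(e^{2\pi i \langle h^{(\alpha)}, \omega \rangle})$, is exactly the characteristic function of $h^{(\alpha)}$ evaluated at $\omega \in Jac$. The whole argument is therefore an instance of Fourier inversion on the compact abelian group $Jac$, whose Pontryagin dual is the lattice $H_1(\mathbb{Z})$.

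First I would verify that the characteristic function is well defined as a function on the torus $Jac$ rather than on all of $H^1(\mathbb{R})$. This requires that $e^{2\pi i \langle h^{(\alpha)}, \omega\rangle}$ be invariant under $\omega \mapsto \omega + \eta$ for $\eta \in H^1(\mathbb{Z})$. Since $h^{(\alpha)}$ is an integer-valued homology class (it is a sum of the integer homology classes of loops in $\mathcal{L}_{\alpha}$) and $\eta \in H^1(\mathbb{Z})$ has integer holonomy on every loop, the pairing $\langle h^{(\alpha)}, \eta\rangle$ is an integer, so $e^{2\pi i \langle h^{(\alpha)}, \eta\rangle} = 1$. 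Hence the expression descends to a genuine function on $Jac$, and the determinant formula from Proposition 1 gives its explicit form.

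Next I would apply the Fourier inversion formula. For $j \in H_1(\mathbb{Z})$ the characters $\omega \mapsto e^{2\pi i \langle j, \omega\rangle}$ form an orthonormal basis (up to the normalization $|Jac|$) of $L^2(Jac)$, and the orthogonality relation $\frac{1}{|Jac|}\int_{Jac} e^{2\pi i \langle j - k, \omega\rangle}\, d\omega = \delta_{j,k}$ holds. Writing the characteristic function as $E(e^{2\pi i\langle h^{(\alpha)},\omega\rangle}) = \sum_{k \in H_1(\mathbb{Z})} P(h^{(\alpha)} = k)\, e^{2\pi i \langle k,\omega\rangle}$, I would multiply both sides by $e^{-2\pi i \langle j,\omega\rangle}$, integrate over $Jac$, divide by $|Jac|$, and invoke orthogonality to extract the single term $P(h^{(\alpha)} = j)$. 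Substituting the determinant expression from Proposition 1 for the characteristic function then yields the stated formula.

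The main obstacle I anticipate is justifying the interchange of summation and integration implicit in the inversion step, together with the absolute convergence of the Fourier series. This amounts to confirming that the characteristic function is continuous (indeed integrable) on the compact torus $Jac$, which follows from the regularity of $\omega \mapsto \zeta_\omega'(0)$ established via the meromorphic continuation in points (c) and (d) of the excerpt, and from the fact that the loop soup has finite expected total homology contribution since short loops carry no homology. Once continuity on the compact torus is in hand, the inversion is a standard application of Fourier analysis on a finite-dimensional torus, and no delicate limiting argument beyond dominated convergence on a compact set is needed.
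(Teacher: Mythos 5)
Your proposal is correct and follows essentially the same route as the paper: the paper's proof is exactly this Fourier inversion on $Jac$ against the characters $e^{-2\pi i\langle j,\omega\rangle}$, followed by substitution of the characteristic function $E(e^{2\pi i\sum_{l\in\mathcal{L}_\alpha}\int_l\omega})=e^{\alpha[\zeta_\omega'(0)-\zeta'(0)]}$ from Proposition 1. Your additional verifications --- periodicity of the characteristic function under $\omega\mapsto\omega+\eta$ for $\eta\in H^1(\mathbb{Z})$, and the Fubini/orthogonality justification (which in fact follows directly from $\sum_k P(h^{(\alpha)}=k)=1$, using that short loops carry no homology so $h^{(\alpha)}$ is almost surely well defined) --- are details the paper leaves implicit, not a different method.
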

 \begin{proof}  
 $$P(h^{(\alpha)}=j)=\frac{1}{\vert Jac\vert}\int_{Jac}E(e^{2\pi i \langle h^{(\alpha)} ,\omega, \rangle}  e^{-2\pi i\langle  j,\omega \rangle})\:d\omega$$
$$ =\frac{1}{\vert Jac\vert}\int_{Jac}E(e^{2\pi i \sum_{l\in \mathcal{L}_{\alpha} }\int_l \omega}  e^{-2 \pi i\langle  j,\omega \rangle})\:d\omega
=\frac{1}{\vert Jac\vert}\int_{Jac}e^{\alpha [ \zeta_{\omega}'(0)-\zeta'(0)]}  e^{-2\pi i\langle  j,\omega \rangle}\:d\omega.$$
\end{proof} 

%
%
Analogous results can be given for the point occupation field in dimension one.\\

\bigskip

\textbf{ Generalisation: }Given a compact gauge group $G$ and an element $\gamma$ of the moduli space of flat connections on the trivial bundle $X \times G$, let us denote $H_{\gamma}(l)$ the conjugacy class of $G$ obtained by taking the holonomy of a loop $l$ from any base point. Given any finite dimensional unitary representation $\pi$ of $G$, let $Tr_{\pi}(H_{\gamma}(l))$ be the trace of its image by the representation.\\
 Let $\zeta_{\gamma,\pi}(s)$ be  the meromorphic extension of $ \dfrac{1}{\Gamma(s)}\int_0^{\infty}t^{s-1}Tr(P_t^{\gamma,\pi})dt$, 
 well defined on $s>\frac{d}{2}$ with for any $u,v \in \mathbb{C}^{dim(\pi)}$ $$P_t^{\gamma,\pi}((x,u),(y,v))=\int [\pi(H_{\gamma}(l))]_{u,v} \mathbb{P}_t^{x,y}(dl).$$\\
 As in the first proposition, an expression can be given for the expectation of the product of the traces of the loop holonomies.\\
 $$E( \prod_{l\in \mathcal{L}_{\alpha} }Tr_{\pi}(H_{\gamma}(l)) )=e^{\alpha [ \zeta_{\gamma,\pi}'(0)-\zeta'(0)]}.$$
 For related results in the finite graph case, see  \cite{lejanito} and \cite{lejanbakry}. See also \cite{LK}, and \cite{Ken} for bundles on graphs.\\  
 Holonomy classes cannot be added as homology classes do. Still some composition results can be given. See \cite{lejanbakry},  section  4-3 for the finite graph case.


\bigskip

\noindent

  D\'epartement de Math\'ematique. Universit\'e Paris-Sud.  Orsay, France.

\bigskip
   yves.lejan@math.u-psud.fr

\end{document}